\newtheorem{theorem}{Theorem}[section]
\newtheorem{lemma}[theorem]{Lemma}
\newtheorem{proposition}[theorem]{Proposition}
\newtheorem{corollary}[theorem]{Corollary}
\theoremstyle{definition}
\newtheorem{definition}[theorem]{Definition}
\newtheorem{example}[theorem]{Example}
\theoremstyle{remark}
\newtheorem{remark}[theorem]{Remark}
\numberwithin{equation}{section}
\newcommand{\N}{\mathbb{N}}                   % natural numbers
\newcommand{\R}{\mathbb{R}}                   % real numbers
\newcommand{\C}{\mathbb{C}}                   % complex numbers
\newcommand{\K}{\mathbb{K}}                   % field K
\newcommand{\vp}{\varphi}                     % mapping phi
\newcommand{\nR}{\mathrm{NR}}                 % Gabrielov irregular
\newcommand{\dl}{\mathfrak{d}}                % totally real embedding
\newcommand{\Dl}{\mathfrak{D}}                % totally real diagonal
\newcommand{\OO}{\mathcal{O}}                 % local ring
\newcommand{\fbd}{\mathrm{fbd}}               % fibre dimension   
\newcommand{\reg}{\mathrm{reg}}               % regular
\newcommand{\sng}{\mathrm{sng}}               % singular
\newcommand{\II}{\mathfrak{I}}                % ideal I
\def\O{\Omega}
\begin{document}

\title{On the holomorphic closure dimension of real analytic sets}

\author{Janusz Adamus}
\address{J. Adamus, Department of Mathematics, The University of Western Ontario, London, Ontario N6A 5B7 Canada,
         and Institute of Mathematics, Jagiellonian University, ul. {\L}ojasiewicza 6, 30-348 Krak{\'o}w, Poland}
\email{jadamus@uwo.ca}

\author{Rasul Shafikov}
\address{R. Shafikov, Department of Mathematics, The University of Western Ontario, London, Ontario N6A 5B7 Canada}
\email{shafikov@uwo.ca}
\thanks{Research was partially supported by the Natural Sciences and Engineering Research Council of Canada.}

\subjclass[2000]{Primary 32B20, 32V40}

\keywords{real analytic sets; semianalytic sets; holomorphic closure dimension; complexification; Gabrielov regularity; CR dimension}

\begin{abstract}
Given a real analytic (or, more generally, semianalytic) set $R$ in $\C^n$ (viewed as $\R^{2n}$), there is, for every $p\in\bar{R}$, 
a unique smallest complex analytic germ $X_p$ that contains the germ $R_p$. We call $\dim_{\C}X_p$ the \emph{holomorphic closure dimension} 
of $R$ at $p$. We show that the holomorphic closure dimension of an irreducible $R$ is constant on the complement of a closed proper 
analytic subset of $R$, and discuss the relationship between this dimension and the CR dimension of $R$.
\end{abstract}

\maketitle

%%%%%%%%%%%%%%%%%%%%%%%%%%%%%%%%%%%%%%%%%%%%%%%%%%
\section{Main Results}
\label{sec:intro}

Given a real analytic set $R$ in $\C^n$ (we may identify $\C^n$ with $\R^{2n}$), we consider the germ $R_p$ at a point $p\in R$, and define $X_p$ to be the unique smallest complex analytic germ at $p$ that contains $R_p$. We will call $\dim_{\C}X_p$ the \emph{holomorphic closure dimension} of $R$ at $p$ (denoted by $\dim_{HC}R_p$). It is natural to ask how this dimension varies with $p\in R$. In~\cite[Thm.~1.1]{S} the second author showed that, if $R$ is irreducible of pure dimension, then $\dim_{HC}R_p$ is constant on a dense open subset of $R$, which allows us to speak of the {\it generic} holomorphic closure dimension of $R$ in this case. Upper semicontinuity implies that the generic value of $\dim_{HC}R_p$ is the smallest value of the holomorphic closure dimension on $R$. It remained an open problem whether the jump in the holomorphic closure dimension actually occurs. In the present paper we construct real analytic sets with non-constant holomorphic closure dimension, and study the structure of the locus of points where this dimension is not generic.

\begin{theorem}
\label{thm:holo-dim}
Let $R$ be an irreducible real analytic set in $\C^n$, of dimension $d>0$. Then there exists a closed real analytic subset $S\subset R$ of dimension less than $d$, such that the holomorphic closure dimension of $R$ is constant on $R\setminus S$.
\end{theorem}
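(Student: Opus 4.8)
The plan is to reduce the statement, via complexification, to a question about the generic rank of a holomorphic map, and then to use that the locus where a holomorphic map drops rank is a proper analytic subset. Identify $\C^n$ with $\R^{2n}$, let $\C^{2n}$ be the complexification of $\R^{2n}$ with the canonical projection $\pi\colon\C^{2n}\cong\C^n\times\C^n\to\C^n$ that restricts to the identity on the real locus, and for $p\in R$ let $R^\C_p\subset\C^{2n}$ be the complexification of the germ $R_p$, i.e. the smallest complex analytic germ at $p$ containing $R_p$. For any complex analytic germ $Y_p\subset\C^n$ one has $Y_p\supseteq R_p$ if and only if $\pi^{-1}(Y_p)\supseteq R^\C_p$, equivalently $Y_p\supseteq\pi(R^\C_p)$; hence $X_p$ is the smallest complex analytic germ at $p$ containing the image $\pi(R^\C_p)$, and $\dim_{HC}R_p$ equals the analytic rank of the holomorphic germ-map $\pi|_{R^\C}$ at $p$, that is, the complex dimension of the smallest complex analytic germ through its image.

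Next I use irreducibility: $R_{\sng}$ is a proper closed real analytic subset of dimension $<d$ and $R_{\reg}=R\setminus R_{\sng}$ is a connected real analytic manifold of dimension $d$; over $R_{\reg}$ the germs $R^\C_p$ glue to a connected $d$-dimensional complex manifold $M$ (a complexification of $R_{\reg}$), in which $R_{\reg}$ sits as a maximal-dimensional totally real submanifold, and $f:=\pi|_M\colon M\to\C^n$ is holomorphic. Let $\rho$ be the generic (hence maximal) rank of $f$, and put $\Sigma=\{x\in M:\operatorname{rank}_x f<\rho\}$, the common zero set of the $\rho\times\rho$ minors of $df$; it is a complex analytic subset of $M$, and it is proper because $\rho$ is generic, so $\dim_\C\Sigma<d$. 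For $x\in M\setminus\Sigma$ the rank of $f$ is locally constant, equal to $\rho$, so by the rank theorem the image of the germ of $f$ at $x$ is a $\rho$-dimensional complex submanifold — hence its own analytic closure — and the analytic rank of $f$ at $x$ is exactly $\rho$. Since a holomorphic function vanishing on a maximal-dimensional totally real submanifold vanishes identically, $R_{\reg}$ is not contained in the proper analytic set $\Sigma$, so $S_0:=\Sigma\cap R_{\reg}$ is a proper real analytic subset of the connected manifold $R_{\reg}$, hence of dimension $<d$; and by the first paragraph $\dim_{HC}R_p=\rho$ for every $p\in R_{\reg}\setminus S_0$.

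Finally, since $\dim_{HC}$ is upper semicontinuous its generic value $\rho$ is also its minimum, so $\dim_{HC}R_p\ge\rho$ on all of $R$ and the exceptional locus $\{p\in R:\dim_{HC}R_p>\rho\}$ is a closed subset of $R$ contained in $\overline{S_0}\cup R_{\sng}$, which is a semianalytic set of dimension $<d$ (a real analytic subset of the open set $R_{\reg}$ is semianalytic in $R$, closure preserves semianalyticity and dimension, and $R_{\sng}$ is real analytic of dimension $<d$). By standard facts about semianalytic sets there is then a real analytic subset $S\subset R$ of dimension $<d$ containing $\overline{S_0}\cup R_{\sng}$, and $\dim_{HC}R_p\equiv\rho$ on $R\setminus S$. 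I expect the genuinely delicate point to be exactly this last passage across $R_{\sng}$: off $R_{\reg}$ the set $R$ need not be coherent, $\dim_\C R^\C_p$ can exceed $d$, and the clean rank picture degenerates, so keeping the exceptional locus inside an honestly real analytic subset of $R$ of the correct dimension requires a careful local analysis of the complexification near singular points (and of the associated semianalytic and Gabrielov-type invariants). The remainder of the argument, on $R_{\reg}$, is in essence just the observation that rank-dropping of $f$ is an analytic condition and hence cuts out a proper, lower-dimensional set.
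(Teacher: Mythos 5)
Your reduction to the projection $\pi$ of the complexification, and your observation that on the locus where $df$ has locally constant (generic) rank $\rho$ the image of the germ is, by the rank theorem, itself a $\rho$-dimensional complex submanifold, are both sound; on that locus this yields $\dim_{HC}R_p=\rho$ by an argument more elementary than the paper's, which instead controls the Gabrielov non-regularity locus of $\pi_z$ restricted to the (possibly singular) complexification via Paw{\l}ucki's theorem. Since the statement only asks for \emph{some} proper exceptional set, discarding the entire rank-degeneracy locus $\Sigma$ rather than only the non-regular locus is legitimate, and $\Sigma\cap R^{\reg}$ is indeed proper by your totally-real uniqueness argument. This part is a genuine simplification for the smooth top-dimensional stratum.

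There is, however, a real gap at the foundation of your second paragraph: for an irreducible real analytic set, $R^{\reg}$ need \emph{not} be a connected pure $d$-dimensional manifold. The double cone $\{x^2+y^2=z^2\}\subset\R^3$ is irreducible (no closed half-cone is an analytic subset, because the germ at the vertex is irreducible), yet its regular locus has two components; and the Cartan umbrella of Example~\ref{ex:cartan} has a regular locus containing a one-dimensional ``stick'' on which the holomorphic closure dimension genuinely differs from the generic value. Consequently your single connected complexification $M$ and single generic rank $\rho$ are not well defined, and nothing in your argument shows that the generic rank agrees on different components of the top-dimensional smooth locus. This is precisely where the paper does its real work: it runs the local analysis on the complexification $A$ of the germ $R_p$ at an \emph{arbitrary} point at which $R$ is locally irreducible --- including singular points, where one irreducible $A$ and one generic fibre dimension $\lambda(\vp)$ govern all adjacent smooth pieces simultaneously --- and then propagates the local constants globally by the path/component argument at the end of Section~\ref{sec:proof-main}. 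You would further need Cartan's result that the lower-dimensional locus of $R$ lies in a proper analytic subset of dimension $<d$, and an actual argument (not ``standard facts'') that $\overline{\Sigma\cap R^{\reg}}\cup R^{\sng}$ --- which is a priori only semianalytic, since $R^{\sng}$ need not be analytic --- is contained in a closed real analytic subset of $R$ of dimension $<d$. You flag this last passage yourself as delicate, but as it stands it, together with the connectedness issue, is where the proof is missing rather than merely unpolished. (A small aside: your worry that $\dim_{\C}R^{c}_p$ can exceed $d$ is unfounded, since $\dim_{\C}R^{c}_p=\dim_{\R}R_p\le d$ always.)
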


We note that the holomorphic closure dimension is well defined on all of $R$, including its
singular locus. If $R$ is not of pure dimension, then (see, e.g., \cite{C}) the locus of smaller dimension is contained in a proper real analytic subset of dimension less than $d$, which can be included into $S$. In particular, $S$ may have non-empty interior in $R$ (see Example~\ref{ex:cartan}). We also note that unless additional conditions are imposed on $R$, the germs $X_p$ realizing the holomorphic closure
dimension of $R$ at $p$ cannot be glued together to form a global complex analytic set containing $R$.

It should be stressed that the discontinuity in the holomorphic closure dimension is by no means related to (real) singularities of $R$. 
As shown in Section~\ref{sec:examples}, the exceptional set $S$ may be non-empty even in the case when $R$ is a smooth real analytic 
submanifold of $\C^n$. Instead, the holomorphic closure dimension should be viewed as a measure of how badly a real analytic set is 
locally twisted  with respect to the complex structure of its ambient space. Thus, one can interpret this dimension as a generalization 
of the concept of a uniqueness set. Indeed, a real analytic set $R$ in $\C^n$ is a local uniqueness set, at a point $p\in R$, for holomorphic 
functions in $\C^n$ if and only if $\dim_{HC} R_p=n$.

The notion of the holomorphic closure dimension can be extended to the class of semianalytic sets. Recall that a set $R\subset \R^{n}$ is semianalytic if, for every point $p\in \R^n$, there exist a neighbourhood $U$ and a finite collection of functions $f_{jk}$ and $g_{jk}$ real analytic in $U$ such that  
\begin{equation*}
  R\cap U =\bigcup_{j} \{x\in U: f_{jk}(x)=0,\ g_{jk}(x)>0,\ k=1,\dots,l\}. 
\end{equation*}
If $R$ is a semianalytic set in $\C^n\cong\R^{2n}$, of dimension $d>0$ at a point $p\in\bar{R}$, then there are an open neighbourhood $U$ of $p$, and a real analytic set $Z\subset U$ of dimension $d$, such that $R\cap U\subset Z$ and $R\setminus\{$\emph{interior of} $R$ \emph{in} $Z\}$ is semianalytic of dimension at most $d-1$ (see~\cite[Thm.~2.13]{BM}). Then we may define the holomorphic closure dimension of $R$ at $p$ as $\dim_{HC}Z_p$. Theorem~\ref{thm:holo-dim} implies then a similar statement for semianalytic sets.

\begin{corollary}
\label{cor:semi-holo-dim}
Let $R$ be a semianalytic set of dimension $d>0$ in $\C^n$. Suppose one of the following conditions holds:
\begin{itemize}
\item[(a)] $R$ is contained in an irreducible real analytic set of the same dimension;
\item[(b)] $R$ is connected, pure-dimensional and is locally contained in a locally irreducible real analytic set of the same dimension. 
\end{itemize}
Then there exists a closed semianalytic subset $S\subset R$, of dimension less than $d$, and such that the holomorphic closure dimension of $R$ is constant on $R\setminus S$.
\end{corollary}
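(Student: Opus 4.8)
The plan is to reduce Corollary~\ref{cor:semi-holo-dim} to Theorem~\ref{thm:holo-dim} by exploiting the structure theorem for semianalytic sets quoted just before the statement, namely \cite[Thm.~2.13]{BM}. The key point is that the holomorphic closure dimension of a semianalytic set $R$ at a point $p\in\bar R$ is defined as $\dim_{HC}Z_p$, where $Z$ is a real analytic set of dimension $d$ containing $R\cap U$ such that $R$ differs from the interior of $R$ in $Z$ by a semianalytic set of dimension at most $d-1$. So the strategy is: first pass to (an irreducible component of) the ambient real analytic set, apply Theorem~\ref{thm:holo-dim} there to get an exceptional real analytic subset, and then intersect back with $R$, being careful that the "interior-in-$Z$" part of $R$ genuinely sees the generic holomorphic closure dimension of $Z$.

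In case (a), let $\tilde R$ be the irreducible real analytic set of dimension $d$ with $R\subseteq\tilde R$. By Theorem~\ref{thm:holo-dim} there is a closed real analytic $\tilde S\subset\tilde R$ with $\dim\tilde S<d$ on whose complement $\dim_{HC}(\tilde R)_q$ is constant, equal to the generic value $k$. Set $S:=R\cap\tilde S$; this is a semianalytic subset of $R$ of dimension $<d$ (it is contained in $\tilde S$), and it is closed in $R$. It remains to check that $\dim_{HC}R_p=k$ for every $p\in R\setminus S$. Fix such a $p$ and a neighbourhood $U$ with a real analytic $Z\subset U$ of dimension $d$ as in \cite[Thm.~2.13]{BM}, so $\dim_{HC}R_p=\dim_{HC}Z_p$. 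On the one hand, since $R\cap U\subset Z$ and $R\cap U$ also lies in $\tilde R$, and $R$ has dimension $d$ at $p$, the germ $Z_p$ must share a $d$-dimensional component with $\tilde R_p$; more precisely, the set of points of $R$ near $p$ that are simultaneously smooth points of $Z$ of dimension $d$ and of $\tilde R$ is of full dimension $d$, hence the $d$-dimensional components of $Z_p$ and of $\tilde R_p$ through such points coincide, and their smallest holomorphic closures coincide. Since $\dim_{HC}(\tilde R)_p=k$ and the lower-dimensional components of $Z_p$ contribute holomorphic closures of dimension $<d\le k$ (or at most $k$, absorbed by upper semicontinuity and genericity) one gets $\dim_{HC}Z_p=k$. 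The mild subtlety is that $Z_p$ is only guaranteed to agree with $\tilde R_p$ up to a set of dimension $<d$, so one works with the union of top-dimensional components and uses that the holomorphic closure of a union is the "join" of the holomorphic closures, which is the main bookkeeping step.

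Case (b) is handled similarly but locally: connectedness plus pure-dimensionality of $R$ forces the generic holomorphic closure dimension to be globally well defined. Cover $R$ by neighbourhoods $U_\alpha$ in each of which $R\cap U_\alpha$ is contained in a locally irreducible real analytic $\tilde R_\alpha$ of dimension $d$; apply Theorem~\ref{thm:holo-dim} (or rather \cite[Thm.~1.1]{S}, the constancy on a dense open set, which suffices at the level of germs) on each $\tilde R_\alpha$ to get a constant value $k_\alpha$ off a real analytic subset $\tilde S_\alpha$ of dimension $<d$. The point is that $k_\alpha$ does not depend on $\alpha$: on overlaps $U_\alpha\cap U_\beta$ the generic holomorphic closure dimension of $R$ — which is intrinsic, being $\dim_{HC}R_p$ at a generic $p$, independent of the choice of ambient $Z$ — equals both $k_\alpha$ and $k_\beta$, and since $R$ is connected and the locus where this generic value is attained is open and dense, all the $k_\alpha$ agree with a single $k$. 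Then $S:=\bigcup_\alpha(R\cap\tilde S_\alpha)$ is a closed semianalytic subset of $R$ of dimension $<d$ (locally finite union, each piece of dimension $<d$), and on $R\setminus S$ the holomorphic closure dimension is constantly $k$ by the same germ-level comparison as in (a).

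The main obstacle is the interplay between the ambient real analytic set $Z$ furnished by \cite[Thm.~2.13]{BM} and the one ($\tilde R$ or $\tilde R_\alpha$) in the hypotheses: a priori these are different real analytic sets through $p$, both of dimension $d$ and both containing $R$ near $p$, and one must argue they have the same top-dimensional germ at $p$, or at least the same holomorphic closure. The cleanest way to do this is to note that $R$ has points near $p$ that are smooth of dimension $d$ in $R$ itself (since the non-smooth locus of a $d$-dimensional semianalytic set is semianalytic of dimension $<d$), and any such point is a smooth $d$-dimensional point of both $Z$ and $\tilde R$, so the unique $d$-dimensional component germs of $Z$ and $\tilde R$ through it are equal; running this over a $d$-dimensional family of such points identifies all top components of $Z_p$ with those of $\tilde R_p$, after which $\dim_{HC}Z_p=\dim_{HC}\tilde R_p$ follows because lower-dimensional components of either cannot raise the holomorphic closure dimension above the generic value $k$ once $p\notin S$. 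Everything else is routine semianalytic dimension bookkeeping.
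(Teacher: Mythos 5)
Your overall strategy is the same as the paper's: apply Theorem~\ref{thm:holo-dim} to the ambient irreducible real analytic set, take the trace of its exceptional set on $R$, and in case (b) propagate the local generic value along $R$ using connectedness and a covering/overlap argument. Your discussion of why the local ambient set $Z$ furnished by \cite[Thm.~2.13]{BM} and the global ambient set $\tilde R$ yield the same holomorphic closure dimension at a point where $R$ has local dimension $d$ is more detailed than the paper's (which simply writes $\dim_{HC}R_p=\dim_{HC}Z_p$), and your case (b) is essentially identical to the paper's.

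There is, however, a genuine gap in case (a): your exceptional set $S=R\cap\tilde S$ is too small, because in (a) the set $R$ is \emph{not} assumed pure-dimensional. At a point $p\in R$ with $\dim_{\R}R_p=d'<d$, the holomorphic closure dimension of $R$ at $p$ is by definition computed from a local real analytic ambient set of dimension $d'$, so $\dim_{HC}R_p\le d'<d$, and this value has nothing to do with the generic value $k$ of $\dim_{HC}\tilde R_q$. Such points need not lie in $\tilde S$. Concretely, take $\tilde R=\R^2\subset\C^2$ a totally real plane (so $\tilde S=\varnothing$ and $k=2$) and let $R$ be the closed unit disc together with the segment $[1,2]\times\{0\}$: at interior points of the segment one has $\dim_{HC}R_p=1\ne 2$, yet $R\cap\tilde S=\varnothing$. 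Your assertion ``and $R$ has dimension $d$ at $p$'' for $p\in R\setminus S$ is exactly where this is assumed without justification. The fix is what the paper does: by semianalytic stratification the locus where $\dim_{\R}R_p<d$ is contained in a closed semianalytic subset $R'\subset R$ of dimension at most $d-1$, and one must take $S=(R\cap\tilde S)\cup R'$. With that correction (and noting that in case (b) pure-dimensionality makes the issue moot, though using Theorem~\ref{thm:holo-dim} rather than the density statement of \cite{S} is needed to get an exceptional set of dimension less than $d$), your argument goes through.
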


Curiously enough, the discontinuity of the holomorphic closure dimension is a purely transcendental phenomenon. This is a direct
consequence of the fundamental connection between the holomorphic closure dimension and the Gabrielov irregularity (see below),
which is established in the proof of Theorem~\ref{thm:holo-dim}.

\begin{theorem}
\label{thm:semialgebraic}
Let $R$ be a connected semianalytic set of pure dimension $d>0$ in $\C^n$, contained (resp. locally contained) in an irreducible (resp. locally irreducible) real analytic set of the same dimension. Suppose further that $R$ is semialgebraic, or that $n<3$. 
Then the holomorphic closure dimension is constant on $R$.
\end{theorem}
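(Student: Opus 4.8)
The plan is to exploit the connection between the holomorphic closure dimension and Gabrielov regularity that already underlies the proof of Theorem~\ref{thm:holo-dim}. Recall that, working with the complexification, the exceptional set $S$ of Theorem~\ref{thm:holo-dim} is precisely the locus where the map realizing $R$ as the image of a real analytic parametrization fails to be Gabrielov regular; equivalently, it is the locus where the generic and the formal (or ``holomorphic closure'') fibre dimensions disagree. So the content of Theorem~\ref{thm:semialgebraic} is that \emph{under the stated hypotheses no Gabrielov irregularity can occur}, and hence $S=\emptyset$ and the holomorphic closure dimension is constant on the connected set $R$.

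First I would reduce to a statement about a single germ: by Theorem~\ref{thm:holo-dim} (and its proof) it suffices to show that at every point $p\in R$ the parametrizing map associated with the ambient (locally) irreducible real analytic set is Gabrielov regular at $p$, i.e. that $X_p$ has the expected dimension. In the semialgebraic case, I would invoke the fact that Gabrielov regularity is automatic for maps with semialgebraic (more generally, Nash) graphs: the analytic-geometric rank and the formal rank of a Nash map coincide, since the ring of formal power series relations over a Nash function germ is finitely generated over the corresponding analytic ring by Artin-type approximation and Nagata's theorem on the relation between formal and algebraic closure. Concretely, the smallest complex analytic germ containing a semialgebraic arc-germ is cut out by the (finitely many) polynomial relations satisfied by the coordinates, so $X_p$ is an algebraic germ whose dimension is computed by a resultant/elimination argument and therefore cannot jump. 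Thus condition $R$ semialgebraic forces $S=\emptyset$.

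For the case $n<3$, i.e. $n\in\{1,2\}$, I would argue by dimension bookkeeping on $d=\dim_\C X_p$ and the CR dimension. If $n=1$ the only possibilities are $d=1$ with $X_p=\C$, or $d=1,2$ trivially constant, so there is nothing to prove. For $n=2$, the pure dimension $d$ of $R$ is $1$, $2$, $3$ or $4$; when $d\in\{1,2\}$ we have $\dim_{HC}R_p=2=n$ generically unless $R$ lies in a complex line, and in either case the value is forced on all of $R$ by local irreducibility and connectedness; when $d\in\{3,4\}$ we automatically have $\dim_{HC}R_p=2=n$ everywhere, since a complex analytic germ of dimension $<2$ has real dimension $<d$ and cannot contain $R_p$. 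The key point is that in $\C^2$ the holomorphic closure dimension takes only the two extreme values $1$ and $2$ away from a thin set, and the hypothesis of (local) irreducibility plus connectedness rules out coexistence of the two values. Here I would lean on the classical description of real analytic curves and surfaces in $\C^2$ and the fact that the CR dimension forces $\dim_{HC}$ from below.

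The main obstacle, as I see it, is the $n=2$, $d=2$, $d=3$ boundary cases: a priori one must exclude that a connected, locally irreducible real analytic surface in $\C^2$ has holomorphic closure dimension $1$ on an open piece and $2$ elsewhere. I expect this to follow from the identity principle applied along the complex curve realizing the $\dim_{HC}=1$ locus, combined with local irreducibility: if $R_p\subset X_p$ with $X_p$ a complex curve on an open set, then $R$ agrees with a complex analytic set on that open set, and local irreducibility propagates this — but making the propagation rigorous across the (possibly singular) intersection, rather than just on a dense open subset, is the delicate step and is exactly where the hypotheses of Theorem~\ref{thm:semialgebraic} are needed. The semialgebraic sub-case sidesteps this obstacle entirely via the algebraicity of $X_p$.
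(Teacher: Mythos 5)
Your overall framework is the right one and matches the paper's: reduce, via the proof of Theorem~\ref{thm:holo-dim}, to showing that the projection $\pi_z$ restricted to the complexification is Gabrielov regular everywhere, so that $S=\varnothing$. For the semialgebraic case your underlying idea (algebraicity forces regularity) is also the paper's, but your justification via Artin approximation and Nagata is vague where a one-line argument exists: the paper first notes that $R$ lies in a real \emph{algebraic} set $Y$ of pure dimension $d$ with $Z_p=Y_p$ at points of local irreducibility, so the complexification is a complex algebraic germ, and then regularity of the projection follows from Chevalley's theorem (the image of a constructible set is constructible) together with $\dim_{\C}E=\dim_{\C}\overline{E}^{\mathrm{Zar}}$ for constructible $E$. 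You should at least record the reduction to an algebraic ambient germ; "the polynomial relations satisfied by the coordinates" does not by itself control the dimension of the smallest \emph{analytic} germ containing the image.

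The $n<3$ case is where your proposal has a genuine gap, which you acknowledge yourself. Your casework on the real dimension $d$ of $R$ and the possible values of $\dim_{HC}$ disposes of $d\geq 3$ (correctly, since a complex curve germ has real dimension $2$), but for $d=1$ and $d=2$ you assert that the value is ``forced on all of $R$ by local irreducibility and connectedness,'' which is precisely the statement to be proved, and your proposed identity-principle propagation across the jump locus is exactly the step you admit you cannot make rigorous. The missing idea is to run the dimension count in the \emph{target}: for $\vp:A\to\C^2$ with $A$ irreducible of dimension $d$, if $\nR(\vp)\neq\varnothing$ then $\vp$ is not dominating, so $d-\lambda(\vp)\leq 1$; on the other hand $\nR(\vp)$ is contained in the locus $X$ of non-generic fibre dimension (Remmert Rank Theorem), $X$ is a proper analytic subset of $A$ (Cartan--Remmert), and along each component of $X$ the generic fibre dimension is at least $\lambda(\vp)+1$, whence
\[
\dim_{\C}\vp(\nR(\vp))\leq (d-1)-(\lambda(\vp)+1)=(d-\lambda(\vp))-2\leq -1\,,
\]
forcing $\vp(\nR(\vp))=\varnothing$ and hence $\nR(\vp)=\varnothing$. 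This argument lives entirely on the complexification and never needs the delicate real-geometric propagation you were attempting.
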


The holomorphic closure dimension is a natural biholomorphic invariant associated with the germ at $p$ of a real analytic set $R$. 
It follows from Theorem~\ref{thm:semialgebraic} that if $R$ has a jump in the holomorphic closure dimension at a point $p\in R$, 
then $R$ is not \emph{algebraizable} at this point; i.e., there does not exist a local biholomorphic change of coordinates sending 
the germ of $R$ at $p$ into the germ of an algebraic variety of the same dimension.
\par

If $R$ is a CR manifold, then the holomorphic closure dimension is constant on $R$, and its value is uniquely determined by the CR dimension of $R$.

\begin{proposition}[cf.{\cite[Lemma~9.3]{hl}}]
\label{prop:cr}
Suppose $M$ is a real analytic CR manifold of dimension $d$ and CR dimension $m\ge 0$. 
Then for any $p\in M$, $\dim_{HC} M_p=d-m$. 
\end{proposition}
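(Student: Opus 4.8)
The plan is to work at a fixed point $p\in M$ and choose local holomorphic coordinates $(z_1,\dots,z_n)$ on $\C^n$ centred at $p$ adapted to the CR structure of $M$. By the standard normal form for real analytic CR manifolds, after a biholomorphic change of coordinates we may assume that near $p$ the manifold $M$ is given by equations of the form
\begin{equation*}
  w_j = \bar w_j,\quad j=1,\dots,k,\qquad \bar z'' = \vp(z',z'',\bar z'),
\end{equation*}
where we split $z=(z',z'',w)$ with $z'\in\C^{m}$ the "CR directions", $z''\in\C^{\ell}$, and $w\in\C^{k}$, and $\vp$ is a germ of holomorphic map vanishing to appropriate order; here $\dim M=d$ forces $2m+\ell+k=d$, while the CR dimension being $m$ means exactly that $M$ near $p$ fibers by the $m$-dimensional complex tangent directions spanned by $\partial/\partial z'$. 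The point of this normal form is that the complexification of $M$ at $p$ — the germ $M^{\C}_p$ of the complexification in $\C^n\times\overline{\C^n}$, with the diagonal identified with the real points — projects onto the first factor $\C^n$ with image a germ of complex analytic set, and I claim this image is precisely the smallest complex analytic germ containing $M_p$, i.e. it realises $X_p$.

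First I would recall the general characterisation of $X_p$ in terms of the complexification: for a real analytic set $R$ with complexification $R^{\C}\subset\O\times\bar\O$ (where $R$ sits in $R^{\C}$ via the diagonal), the holomorphic closure $X_p$ is the image under the projection $\pi:\C^n\times\overline{\C^n}\to\C^n$ of the unique irreducible component of $R^{\C}_p$ through $p$ — more precisely $\dim_{\C}X_p$ equals the dimension of that projection's image. This is the key bookkeeping fact, and it is where I would lean on the structure already exploited in the proof of Theorem~\ref{thm:holo-dim} (the connection with Gabrielov regularity is exactly the statement that $\dim X_p = \dim \pi(R^{\C}_p)$, the "generic rank" of the parametrisation). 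Next, using the normal-form equations above, I would simply compute: the complexification $M^{\C}_p$ is the germ cut out by $w_j=\zeta_j$ (the conjugate variable of $w_j$) and $\zeta''=\vp(z',z'',\bar z')|_{\bar z'\mapsto \zeta'}$, where $(\zeta',\zeta'',\zeta)$ are the coordinates on the conjugate factor. The projection to $\C^n$ is then a submersion onto the set $\{(z',z'',w): w=\text{anything}\}$ — concretely, solving the equations for $(\zeta',\zeta'',\zeta)$ shows that $z',z''$ and $w$ range freely near $0$ subject to the reality of $w$ being dropped after complexification, so $\pi(M^{\C}_p)$ is a germ of a complex submanifold of dimension $m+\ell+k = d-m$.

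The computation in the last step splits naturally into two inclusions. For $\dim_{\C}X_p\le d-m$: the explicit complex submanifold $\Sigma_p=\{\zeta''=\dots,\ w=z'\text{-data}\}$ projected to $\C^n$ is a complex analytic germ of dimension $d-m$ manifestly containing $M_p$ (it contains $M_p$ because every real point of $M$ near $p$ satisfies the complexified equations with $\zeta=\bar z$), hence $X_p\subseteq \Sigma_p$ and $\dim_{\C}X_p\le d-m$. For the reverse $\dim_{\C}X_p\ge d-m$: I need that no smaller complex germ contains $M_p$; equivalently $M_p$ is not contained in any complex hypersurface of $\Sigma_p$, which follows because the real tangent space $T_pM$ spans, over $\C$, the whole tangent space $T_p\Sigma_p$ — indeed $T_pM = \C^m_{z'} \oplus \R^{\ell}_{\mathrm{Re}\,z''}\oplus\dots$ has $\C$-span of complex dimension $m + \ell + k = d-m$ (the CR directions $z'$ contribute $m$ complex dimensions, and the remaining $d-2m$ totally real directions contribute $d-2m$ more). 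Since $M$ is a manifold, being contained in a complex germ of dimension $<\dim_{\C}(\text{span}_{\C}T_pM)$ is impossible. Combining, $\dim_{HC}M_p = d-m$. The main obstacle I anticipate is making the normal-form reduction and the "span of the real tangent space" argument interact cleanly with the definition of $X_p$ — specifically, verifying rigorously that the minimality of $X_p$ is detected by the $\C$-linear span of $T_pM$ (this needs that a real analytic manifold whose tangent space $\C$-spans a $q$-dimensional complex subspace cannot lie in a complex germ of dimension $<q$, which is elementary via the implicit function theorem but must be stated), and handling the bookkeeping of indices $2m+\ell+k=d$ without sign or dimension slips. I would also remark that this recovers \cite[Lemma~9.3]{hl} and note the consistency check $m\le\lfloor d/2\rfloor$ and the extreme cases $m=0$ (totally real, $\dim_{HC}=d$) and $d=2m$ (generic CR, $\dim_{HC}=m$).
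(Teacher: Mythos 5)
Your lower bound coincides with the paper's: the $\C$-span of $T_pM$ has complex dimension $m+(d-2m)=d-m$, so $M_p$ cannot be contained in a complex analytic germ of smaller dimension; that part is fine, modulo the elementary lemma you correctly flag. The gap is in the upper bound. You derive it from a ``standard normal form'', but (i) the normal form as you state it is incorrect: with the equations $w_j=\bar w_j$ ($k$ real conditions) and $\bar z''=\vp(z',z'',\bar z')$ ($2\ell$ real conditions), the manifold has real dimension $2m+k$, not $2m+\ell+k$, so your bookkeeping $2m+\ell+k=d$ forces $\ell=0$ and the displayed equations do not describe a general CR manifold of CR dimension $m$ in $\C^n$; and (ii) more importantly, the existence of such a normal form --- equivalently, the fact that a real analytic CR manifold locally sits as a generic submanifold of a complex submanifold of dimension $d-m$ (its intrinsic complexification) --- \emph{is} the substantive content of the proposition. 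Its proof is precisely what the paper supplies: complexify a real analytic parametrization $\phi:\R^d\to\C^n$ to a holomorphic $\phi^c$ on a neighbourhood in $\C^d$; observe that $d\phi^c$ has rank $d-m$ at real points, because its image there is the $\C$-span of the image of $d\phi$; propagate this rank to a full neighbourhood by applying the identity principle to the $(d-m+1)\times(d-m+1)$ minors, which vanish on the totally real set $\R^d$; and invoke the Rank Theorem to produce a $(d-m)$-dimensional complex submanifold containing $M$ near $p$. Invoking the normal form without proof therefore assumes exactly the step that needs to be established.

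A second, related slip: you assert that $\dim_\C X_p$ equals the dimension of the image of the complexification $M^c_p$ under the projection $\pi_z$. That is false in general --- it equals the dimension of the smallest complex analytic germ \emph{containing} that image, and the discrepancy between the two is precisely the Gabrielov irregularity that drives this paper (see $(\dag)$ in Section~\ref{sec:proof-main} and Example~\ref{ex:S-non-empty}, where the image of the projection has generic rank $2$ while $\dim_{HC}R_0=3$). For a CR manifold the projection does turn out to be regular, so the two numbers agree, but that is a consequence of the constant-rank argument above, not something that may be assumed at the outset.
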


The above proposition is well-known. The proof that we provide here for the sake of completeness essentially repeats the argument 
of \cite[Lemma~9.3]{hl}, where the result is proved for CR manifolds $M$ of hypersurface type (i.e., $\dim_{CR}M=(d-1)/2$). On the 
other hand, if $M$ is a real analytic manifold which is not CR, then at CR singularities, the number $d-m$ can be different 
from the holomorphic closure dimension of $M$. In fact, at CR singularities of $M$ there does not seem to be any relationship between 
the two invariants. Indeed, the holomorphic closure dimension in general is bounded only by the dimension of the ambient space (see Remark~\ref{rem:sky's-the-limit}). 

\begin{theorem}
\label{cor:cr}
Let $R$ be an irreducible real analytic set of pure dimension $d$, and generic holomorphic closure dimension $h$. Then there exists a 
semianalytic subset $Y$ of $R$, $\dim Y<d$, such that $R\setminus Y$ is a CR manifold of CR~dimension $m=d-h$.
\end{theorem}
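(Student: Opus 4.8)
The plan is to carve out of $R$ a dense open subset $M$ that is simultaneously a smooth real analytic submanifold of $\C^n$, a CR submanifold, and a set on which the holomorphic closure dimension is constant; and then to identify its CR dimension by combining Proposition~\ref{prop:cr} with Theorem~\ref{thm:holo-dim}.

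We may assume $d>0$, the case $d=0$ being trivial. The first step is to pass to the regular locus. Since $R$ is irreducible of pure dimension $d$, the set $\reg R$ of points at which $R$ is a $d$-dimensional real analytic submanifold of $\C^n$ is open and dense in $R$, is connected (by irreducibility; cf.\ \cite{C}), and its complement $\sng R$ has dimension $<d$. On $\reg R$ define $\rho(p):=\dim_{\R}(T_pR\cap J\,T_pR)$, where $J$ denotes the complex structure of $\C^n\cong\R^{2n}$; as $T_pR\cap J\,T_pR$ is the largest complex subspace of $T_pR$, the integer $\rho(p)$ is always even. Writing $R$ locally as $\{f_1=\dots=f_k=0\}$ on an open set $V\subset\R^{2n}$, one checks that for a regular point $p$ the subspace $T_pR\cap J\,T_pR$ is the kernel of the real-linear map $v\mapsto(df(p)v,\,df(p)Jv)$, so that $\rho(p)$ equals $2n$ minus the rank of this map. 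Since rank is lower semicontinuous, $\rho$ is upper semicontinuous on $\reg R$, and the locus $\Sigma:=\{p\in\reg R:\rho(p)>\rho_0\}$, with $\rho_0:=\min_{\reg R}\rho$, is cut out near each point by the simultaneous vanishing of finitely many minors of the matrix of this map --- real analytic functions on $V$. Hence, locally, $\Sigma=(R\cap\{g_1=\dots=g_L=0\})\setminus\sng R$, so $\Sigma$ is semianalytic in $R$; and as $\rho_0$ is the generic (hence minimal) value of $\rho$ on the connected $d$-manifold $\reg R$, the set $\Sigma$ is a proper real analytic subset of $\reg R$, whence $\dim\Sigma<d$.

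Now set $M:=\reg R\setminus\Sigma$. This is a dense open subset of $R$ (its complement $\sng R\cup\Sigma$ is closed and of dimension $<d$) and a $d$-dimensional real analytic submanifold of $\C^n$ on which $\rho\equiv\rho_0$; hence $M$ is a real analytic CR manifold of CR dimension $m_0:=\rho_0/2$. It remains to show $m_0=d-h$. Let $S\subset R$ be the closed real analytic subset of dimension $<d$ furnished by Theorem~\ref{thm:holo-dim}: the holomorphic closure dimension of $R$ is constant on $R\setminus S$, and, being generic, this constant equals $h$. Since $M$ and $R\setminus S$ are both dense open in $R$, choose $p\in M\cap(R\setminus S)$. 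As $M$ is a neighbourhood of $p$ in $R$, the germs $R_p$ and $M_p$ coincide, so $\dim_{HC}R_p=\dim_{HC}M_p$; and Proposition~\ref{prop:cr}, applied to the CR manifold $M$, gives $\dim_{HC}M_p=d-m_0$. On the other hand $p\notin S$ gives $\dim_{HC}R_p=h$. Therefore $m_0=d-h$. Finally, $Y:=R\setminus M=\sng R\cup\Sigma$ is a closed semianalytic subset of $R$ with $\dim Y<d$, and $R\setminus Y=M$ is a CR manifold of CR dimension $m=d-h$, as required.

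The only point that demands real care is the analysis of the CR-singular locus $\Sigma$: one must verify that it is semianalytic \emph{as a subset of $R$} --- not merely analytic inside the open manifold $\reg R$ --- for which it is essential that the defining minors extend real-analytically to the ambient $\R^{2n}$; and that $\Sigma$ is a \emph{proper} subset of $\reg R$, for which the connectedness of $\reg R$ (that is, the irreducibility of $R$) guarantees that $\rho$ has a well-defined generic value, attained on a dense open set. Granting this, the equality $m_0=d-h$ is immediate from Proposition~\ref{prop:cr} and Theorem~\ref{thm:holo-dim}, and the passage back from $M$ to $R$ is formal.
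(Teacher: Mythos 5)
Your overall strategy --- stratify $R^{\reg}$ by the CR dimension of the tangent space and then identify the generic CR dimension by combining Proposition~\ref{prop:cr} with Theorem~\ref{thm:holo-dim} --- is the same as the paper's, but there are two genuine gaps.

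First, the claim that $R^{\reg}$ is connected because $R$ is irreducible is false for real analytic sets. For example, the cone $\{x_2^2=x_1^2+y_1^2,\ y_2=0\}$ in $\C^2$ is irreducible (neither half-cone is a real analytic subset of $\C^2$: a real analytic function vanishing on one half vanishes on the irreducible germ of the cone at the vertex, hence, by the identity principle, on the other half), is pure $2$-dimensional, and its regular locus has two connected components. Your argument needs connectedness in an essential way: you set $\rho_0=\min_{R^{\reg}}\rho$ and conclude that $\Sigma=\{\rho>\rho_0\}$ has dimension $<d$ because $\rho_0$ is the \emph{generic} value of $\rho$; but if $R^{\reg}$ had components with different generic values of $\rho$, then $\Sigma$ would contain an entire component and have dimension $d$. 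That all components share the same generic CR dimension, namely $d-h$, is precisely the nontrivial content of the theorem; the paper obtains it by running your final step (Proposition~\ref{prop:cr} at a point off $S$) on \emph{each} connected component of $R\setminus(R^{\sng}\cup S)$ separately, using the constancy of the holomorphic closure dimension from Theorem~\ref{thm:holo-dim} to equate the answers across components. The gap is therefore fixable with tools you already invoke, but as written the proof rests on a false assertion.

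Second, your verification that $\Sigma$ is semianalytic works only at points of $R^{\reg}$, where one can choose ambient defining functions of constant rank and describe $\Sigma$ by the vanishing of minors. Semianalyticity is a local condition at every point of the ambient space, and $\overline{\Sigma}$ may meet $R^{\sng}$, where the rank of any defining system drops and your description breaks down; nothing in your argument controls $\Sigma$ near such points. This is not a formality: the paper appeals here to a theorem of {\L}ojasiewicz~\cite{l1} asserting that the sets $S_k(R^{\reg})=\{p\in R^{\reg}:\dim_{CR}T_pR^{\reg}\ge k\}$ are semianalytic subsets of $R$, not merely of the open manifold $R^{\reg}$. Without this (or an equivalent argument), you only obtain a $Y$ that is semianalytic in $\C^n\setminus R^{\sng}$, which is weaker than what the theorem asserts.
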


In particular, the above theorem implies that similar to real analytic manifolds, the {\it generic} CR dimension is correctly defined 
also for real analytic sets of pure dimension.

Since most of the arguments involved in the proofs are local, all of the above results can be
easily extended to the case when $R$ is a real analytic or semianalytic subset of an arbitrary 
connected complex manifold. For the sake of simplicity of the statements we chose to work in $\C^n$.

In the next section we give some background material. Theorem~\ref{thm:holo-dim} is proved in Section~\ref{sec:proof-main}, and the 
proofs of Theorem \ref{thm:semialgebraic} and Corollary~\ref{cor:semi-holo-dim} are given in Section~\ref{sec:corollaries}. Proposition~\ref{prop:cr} and Theorem~\ref{cor:cr} are proved in Section~\ref{prop-proofs}. Finally, the last section contains relevant explicit examples.

%%%%%%%%%%%%%%%%%%%%%%%%%%%%%%%%%%%%%%%%%%%%%%%%%%
\medskip
%%%%%%%%%%%%%%%%%%%%%%%%%%%%%%%%%%%%%%%%%%%%%%%%%%
\section{Background}\label{sec:background}

Given a real linear subspace $L$ in $\C^n$ of dimension $d$, we define the {\it CR dimension} of $L$ to be the largest $m$ 
such that $L$ contains a complex linear subspace of (complex) dimension $m$. Clearly, $0\le m\le \left[\frac{d}{2}\right]$. 
A real submanifold $M$ in $\C^n$ of real dimension $d$ is called a {\it CR manifold} of CR dimension $m$, if the tangent 
space $T_p M$ contains a complex linear subspace of dimension $m$, where $m$ is independent of the point $p\in M$. We write 
$\dim_{CR}M=m$. In particular, if $m=0$, then $M$ is called a {\it totally real} submanifold.

A real (resp. complex) analytic set $X$ in an open set $\O\subset\R^n$ (resp. $\O\subset\C^n$) is a closed subset of $\O$, locally (i.e., in a neighbourhood of each point in $\O$) defined as the zero locus of finitely many real analytic (resp. holomorphic) functions. $X$ is called 
irreducible (in $\O$) if it cannot be represented as a union of two non-empty real (resp. complex) analytic sets in $\O$ properly contained in $X$. Any germ of a real or complex analytic set $X$ admits decomposition into a finite union of irreducible germs, while a global decomposition 
into irreducible components in general exists only for complex analytic sets (see, e.g., \cite{n}).

Throughout this article, the dimension of a set $X$ is understood in the following sense: if $X$ is a subset of a $\K$-manifold $M$ ($\K=\R$ or $\C$), then
\[
\dim_{\K} X =\max\{\dim_{\K}N :\ N\subset X	,\ N\mathrm{\ a\ closed\ submanifold\ of\ an\ open\ subset\ of\ }M\}\,.
\]
The dimension of the germ $X_p$ of a set $X$ at a point $p\in M$ is then defined as
\[
\dim_{\K} X_p = \min\{\dim_{\K}(X\cap U) :\ U\mathrm{\ an\ open\ neighbourhood\ of\ } p \mathrm{\ in\ }M\}\,.
\]

Let $p$ be a point in $\R^n$ viewed as as subset of $\C^n=\R^n+i\R^n$. For the germ $R_p$ of a real analytic set 
$R\subset\R^n$, there is a unique germ $R_p^c$ of a complex analytic set at $p$, such that $R_p\subset R_p^c$ 
and any germ of a holomorphic function at $p$ which vanishes on $R_p$, vanishes also on $R_p^c$ (see, e.g., \cite{C} 
or \cite{n}). $R_p^c$ is called the \emph{complexification} of $R_p$. If $\II(R_p)\subset\OO^{\R}_{p}$ (resp. 
$\II(R_p^c)\subset\OO^{\C}_{p}$) is the ideal of real analytic (resp. holomorphic) germs vanishing on $R_p$ (resp. 
on $R_p^c$), then $\II(R_p^c)=\II(R_p)\otimes_{\R}\C$. It follows that $\dim_{\R}R_p=\dim_{\C}R_p^c$ 
($=2\!\cdot\!\dim_{\R}R_p^c$), and if $R_p$ is irreducible, then so is~$R_p^c$.

For our purposes, it is convenient to realize complexification by means of the following construction. Let 
$\dl:\C^n_{\zeta}\to\C^{2n}_{(z,w)}$ be the map defined by $\dl(\zeta)=(\zeta,\bar\zeta)$. Then 
$\Dl=\dl(\C^n)$ is a totally real embedding of $\C^n$ into $\C^{2n}$. Suppose $R$ is a real analytic set in 
$\C^n$, and $p\in R$. Then the complexification $R^c_p$ of $R_p$ can be identified with the complexification of the 
germ of $\dl(R)$ at $\dl(p)$ in $\C^{2n}$, that is, the smallest germ of a complex analytic set in $\C^{2n}$ 
which contains the germ of $\dl(R)$ at~$\dl(p)$. 

Let now $X$ be a complex analytic set in an open neighbourhood $U$ of $p$ in $\C^n$, defined by 
$g_1,\dots,g_t\in\OO(U)$. We set
\begin{equation}
\label{eq:xz}
\begin{split}
 X_z &=\{(z,w)\in U': g_k(z)=\sum_{|\nu|\ge 0} c_{k\nu}z^{\nu}=0,\; k=1,\dots,t\}\\
 X_w &=\{(z,w)\in U': \bar{g}_k(w)=\sum_{|\nu|\ge 0} \overline{c}_{k\nu} w^{\nu}=0,\; k=1,\dots,t\}\,,
\end{split}
\end{equation}
where $U'$ is some small open neighbourhood of $\dl(p)$ in $\C^{2n}$. Let $\pi_z:\C^{2n}_{(z,w)}\to\C^n_z$ 
and $\pi_w:\C^{2n}_{(z,w)}\to\C^n_w$ be the coordinate projections. Then $X_z=\pi_z(X_z)\times\C^n$ and 
$X_w=\C^n\times\pi_w(X_w)$, as the defining equations of $X_z$ (resp. $X_w$) do not involve variables $w$ 
(resp. $z$). Therefore, the set
\begin{equation}
\label{eq:x-hat}
\widehat X:= X_z\cap X_w=\pi_z(X_z)\times\pi_w(X_w)
\end{equation}
is complex analytic (in $U'$) of dimension equal twice the dimension of $X$. If $X_p$ is irreducible, then the 
complexification $X^c_p$ of $X_p$ (viewed as a real analytic germ) coincides with $\widehat{X}_{\dl(p)}$.
Indeed, clearly $\dl(X)\subset\widehat{X}$, hence $X^c_p\subset\widehat{X}_{\dl(p)}$. But the irreducibility of $X_p$ implies that of $\widehat{X}_{\dl(p)}$ (by~\eqref{eq:xz} and~\eqref{eq:x-hat}), and $\dim X^c_p=2\dim X_p=\dim\widehat{X}_{\dl(p)}$, so $X^c_p=\widehat{X}_{\dl(p)}$.
\medskip

A subset $R$ of $\R^n$ is called {\it semianalytic} if for any point $p\in \R^n$ there exist a neighbourhood $U$ 
and a finite number of functions $f_{jk}$ and $g_{jk}$ real analytic in $U$ such that  
\begin{equation*}
  R\cap U =\bigcup_{j} \{\zeta\in U: f_{jk}(\zeta)=0,\ g_{jk}(\zeta)>0,\
  k=1,\dots,l\}. 
\end{equation*}
In particular, a real analytic set is semianalytic. A point $p$ of a semianalytic set $R$ is called \emph{regular} if near $p$ the 
set $R$ is just a real analytic manifold. The complement of the set $R^{\reg}$ of regular points is called the \emph{singular} locus of $R$, and will be denoted by $R^{\sng}$. The singular locus of a semianalytic set is itself a (closed) semianalytic set (\cite{L2}). For more details on real analytic sets see, e.g., \cite{C} or \cite{n}. For an exposition of semianalytic sets we refer the reader to~\cite{BM} (and its bibliography), since the classical monograph~\cite{L2} is difficult to access.
\medskip

The other main technique used in the proof of Theorem~\ref{thm:holo-dim} is Gabrielov regularity, which we will now briefly recall.
The notion of regularity, introduced in the seminal paper~\cite{Ga} of Gabrielov, plays an important role in subanalytic geometry, 
where it is responsible for certain ``tameness'' properties of subanalytic sets (see, e.g.,~\cite{BM2}, or the recent~\cite{ABM} 
for details). For our purposes though, it suffices to consider regularity in the complex analytic context. Let $\vp:M\to N$ be a 
holomorphic mapping of complex manifolds, and let $a\in M$. Then $\vp$ induces a pull-back homomorphism of local rings 
$\vp^*_a:\OO_{N,\vp(a)}\to\OO_{M,a}$, and further, a homomorphism of their completions $\hat\vp^*_a:\hat\OO_{N,\vp(a)}\to\hat\OO_{M,a}$. 
Consider the following three kinds of rank of $\vp$ at $a$:
\begin{align}
\notag
r^1_a(\vp)=& \mathrm{\ the\ generic\ rank\ of}\ \vp\mathrm{\ near}\ a\,,\\
\notag
r^2_a(\vp)=& \mathrm{\ the\ Krull\ dimension\ of}\ \ \hat{\mathcal{O}}_{N,\vp(a)}/\ker\hat{\vp}^*_a\,,\\
\notag
r^3_a(\vp)=& \mathrm{\ the\ Krull\ dimension\ of}\ \ \mathcal{O}_{N,\vp(a)}/\ker\vp^*_a\,.
\end{align}
It is easy to see that $r^1_a(\vp)\leq r^2_a(\vp)\leq r^3_a(\vp)$ (both inequalities can be strict, see Example~\ref{ex:no-semi-coherent}). 
Gabrielov~\cite{Ga} proved that if $r^1_a(\vp)=r^2_a(\vp)$ then $r^2_a(\vp)=r^3_a(\vp)$.

\begin{definition}
\label{def:G-regular}
The map $\vp$ is called \emph{(Gabrielov) regular} at $a$ if $r^1_a(\vp)=r^3_a(\vp)$. Equivalently, there exists an open neighbourhood $U$ of 
$a$ in $M$ such that the image $\vp(U)$ is contained in a locally analytic subset of $N$ of dimension $r^1_a(\vp)$.
\end{definition}

Let $\nR(\vp)$ denote the locus in $M$ of points at which $\vp$ is not regular. We have:

\begin{theorem}[{\cite[Thm.~1]{P2}}]
\label{thm:Pawlucki}
The non-regular locus $\nR(\vp)$ is a nowhere dense complex analytic subset of $M$.
\end{theorem}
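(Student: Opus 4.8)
The plan is to split the statement into its two halves --- analyticity and nowhere density --- and to observe that the second comes essentially for free once the first is known. Work on a fixed connected component of $M$ and let $r$ be the maximal rank of $\vp$ there. Then $C=\{x\in M:\mathrm{rank}_x\vp<r\}$ is the common zero locus of the $r\times r$ minors of the Jacobian of $\vp$ in local coordinates, hence a proper complex analytic subset of the component, and its complement (the set of points of maximal rank) is open and dense; in particular $r^1_x(\vp)=r$ for every $x$ in the component. If $x\notin C$, then $\vp$ has constant rank $r$ near $x$, so by the holomorphic rank theorem $\vp$ is, in suitable local coordinates, a linear projection onto an $r$-dimensional coordinate subspace; hence a neighbourhood of $x$ is mapped into an analytic subset of dimension $r=r^1_x(\vp)$, and $\vp$ is regular at $x$ by Definition~\ref{def:G-regular}. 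Thus $\nR(\vp)\subset C$; in particular $\nR(\vp)$ is nowhere dense, and the whole content of the theorem is the assertion that $\nR(\vp)$ is complex analytic.

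I would first reduce this to a statement about the second rank. Since $r^1_a(\vp)\le r^2_a(\vp)\le r^3_a(\vp)$ always, Gabrielov's rank theorem (quoted above) shows that $\vp$ is regular at $a$ if and only if $r^1_a(\vp)=r^2_a(\vp)$: equality of the first two forces $r^2_a(\vp)=r^3_a(\vp)$ and hence $r^1_a(\vp)=r^3_a(\vp)$, while conversely $r^1_a(\vp)=r^3_a(\vp)$ squeezes $r^2_a(\vp)$ between two equal numbers. As $r^1_a(\vp)=r$ is constant on the component, this gives
\[
\nR(\vp)=\{a\in M:\ r^2_a(\vp)\ge r+1\}\,,
\]
so it suffices to show that, for each integer $k$, the set $\{a\in M:\ r^2_a(\vp)\ge k\}$ --- the locus where the \emph{formal} image of $\vp$ has dimension at least $k$ --- is a complex analytic subset of $M$.

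This last point is the real work, and here I would follow Pawłucki~\cite{P2}. The assertion is local, so one fixes $a$, chooses coordinates with $a=0$ and $\vp(0)=0$, and argues by a (delicate) induction on the dimension of the source. The inductive step reduces --- via a Weierstrass preparation in a suitably chosen source variable together with a generic linear change of coordinates in the target --- the computation of the Krull dimension of $\hat\OO_{N,0}/\ker\hat\vp^*_0$ to generic ranks of certain holomorphic maps on source spaces of strictly smaller dimension; for the latter, the loci where the generic rank jumps are analytic by Remmert's classical rank theorem, and the inductive hypothesis takes care of the rest. One then has to verify that this reduction is \emph{uniform} in the base point --- that, near $0$, the condition $r^2_a(\vp)\ge k$ is cut out by the simultaneous vanishing of finitely many holomorphic functions of the parameter.

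This uniformity is the hard step. The number $r^2_a(\vp)$ is built out of \emph{formal} power series, and the equations defining the formal image are a priori only formal, not convergent, so there is no obvious reason for its sublevel sets to be defined by convergent equations at all. The crux of Pawłucki's argument is an a priori bound --- in the spirit of Artin approximation and of Gabrielov's effective estimates on formal relations between analytic functions --- on the order to which the relations $F\circ\vp\equiv 0$ must be tested in order to certify $r^2_a(\vp)\ge k$; this turns the transcendental-looking condition into a finite system of holomorphic equations and lets the induction close. Everything else --- the reduction to nowhere density, the passage through Gabrielov's theorem, and the bookkeeping of the three ranks --- is by comparison routine.
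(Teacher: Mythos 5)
First, a point of reference: the paper does not prove Theorem~\ref{thm:Pawlucki} at all --- it is quoted from Paw{\l}ucki \cite{P2} --- so there is no internal proof to compare yours against. Your preliminary reductions are correct: nowhere density does follow from the constant-rank theorem off the degeneracy locus $C$ of the Jacobian, and Gabrielov's theorem does show that regularity at $a$ is equivalent to $r^1_a(\vp)=r^2_a(\vp)$, so that on a component of generic rank $r$ one has $\nR(\vp)=\{a: r^2_a(\vp)\ge r+1\}$.

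Two problems remain. First, the strengthening you propose --- that $\{a\in M: r^2_a(\vp)\ge k\}$ is analytic for \emph{every} $k$ --- is false, and the paper itself contains the counterexample: in Example~\ref{ex:no-semi-coherent}, Paw{\l}ucki's map has generic rank $3$ and $\nR(\vp)=\{x=y=0\}$ is analytic, yet $\{a: r^2_a(\vp)\ge 5\}=\{(0,0,0)\}\cup\{(1/n,0,0):n\ge1\}$ is not analytic, indeed not even subanalytic. Only the single level $k=r+1$ behaves well, and that level is precisely $\nR(\vp)$, so the ``reduction'' is circular: it restates the theorem rather than advancing toward it. Second, and more importantly, the proof of that one assertion is absent: the paragraph beginning ``This last point is the real work'' describes the shape of Paw{\l}ucki's induction (Weierstrass preparation, descent in the source dimension, an effective bound on the order to which the formal relations $F\circ\vp\equiv 0$ must be tested) without carrying out any of it. You have correctly located the difficulty --- the uniformity in the base point of a condition defined by formal power series --- but locating it is not resolving it. As a self-contained argument the proposal therefore has a genuine gap; as an account of where the real content of \cite{P2} lies it is accurate, which is presumably why the authors simply cite the result.
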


Let now $A$ be an irreducible complex analytic subset of $M$. We will say that $\vp|_A$ is (Gabrielov) regular at a point $a\in A$ if there exists an open neighbourhood $U$ of $a$ such that $\dim_{\vp(a)}\vp(A\cap U)=\dim_{\vp(a)}\overline{\vp(A\cap U)}^{\mathrm{Zar}}$, where 
$\overline{\vp(W)}^{\mathrm{Zar}}$ denotes the Zariski closure, that is, the smallest locally analytic subset of $N$ containing $\vp(W)$. 
Let again $\nR(\vp|_A)$ denote the non-regular locus of $\vp|_A$. It is well known that, by composing with desingularization of $A$, Theorem~\ref{thm:Pawlucki} implies that $\nR(\vp|_A)$ is a nowhere dense analytic subset of $A$. We include the following lemma for completeness.

\begin{lemma}
\label{lem:non-regular}
The non-regular locus $\nR(\vp|_A)$ is a nowhere dense analytic subset of $A$.
\end{lemma}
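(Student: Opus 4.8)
The plan is to reduce the statement to Theorem~\ref{thm:Pawlucki} by resolving the singularities of $A$. Since $A$ is irreducible, fix a resolution $\sigma:\tilde A\to A$, i.e.\ a connected complex manifold $\tilde A$ together with a proper surjective holomorphic map $\sigma$ that restricts to a biholomorphism of $\tilde A\setminus\sigma^{-1}(A^{\sng})$ onto $A^{\reg}=A\setminus A^{\sng}$, and put $\psi:=\vp\circ\sigma:\tilde A\to N$. As $\psi$ is holomorphic between complex manifolds, Theorem~\ref{thm:Pawlucki} shows that $E:=\nR(\psi)$ is a nowhere dense complex analytic subset of $\tilde A$; by Remmert's proper mapping theorem $\sigma(E)$ is then a complex analytic subset of $A$. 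The remaining work is to compare $\nR(\vp|_A)$ with $\sigma(E)$, and to descend from $\tilde A$ to $A$.

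The comparison over $A^{\reg}$, where $\sigma$ is a biholomorphism, is bookkeeping with Gabrielov's ranks. For $a\in A^{\reg}$ and $\tilde a=\sigma^{-1}(a)$, the map $\sigma$ identifies a neighbourhood of $\tilde a$ in $\tilde A$ with one of $a$ in $A$, carrying $\psi$ to $\vp|_A$; and then the generic rank $r^1_{\tilde a}(\psi)$ equals the dimension of the germ of the image, $\dim_{\vp(a)}\vp(A\cap U)$ for $U$ small (the classical identity: generic rank of a holomorphic map near a point $=$ dimension of the germ of its image), while $r^3_{\tilde a}(\psi)=\dim\bigl(\OO_{N,\vp(a)}/\ker\psi^*_{\tilde a}\bigr)$ is the dimension of the smallest analytic germ containing that image, i.e.\ $\dim_{\vp(a)}\overline{\vp(A\cap U)}^{\mathrm{Zar}}$. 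Hence, by Definition~\ref{def:G-regular} together with the definition of regularity of $\vp|_A$, $\psi$ is regular at $\tilde a$ if and only if $\vp|_A$ is regular at $a$. Over $A^{\sng}$ one uses compactness of the fibres of $\sigma$: if $\psi$ is regular at every point of the compact set $\sigma^{-1}(a)$, then a small connected neighbourhood $U$ of $a$ has $\sigma^{-1}(U)$ covered by finitely many open sets on each of which the image of $\psi$ lies in a locally analytic set of dimension at most the generic rank of $\psi$ on $\sigma^{-1}(U)$; since $\vp(A\cap U)=\psi(\sigma^{-1}(U))$ and that generic rank equals $\dim_{\vp(a)}\vp(A\cap U)$, the germ $\overline{\vp(A\cap U)}^{\mathrm{Zar}}$ has the same dimension as $\vp(A\cap U)$, so $\vp|_A$ is regular at $a$. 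Combining the two cases, $\nR(\vp|_A)\subseteq\sigma(E)$, with equality on the dense open set $A^{\reg}$; in particular $\sigma(E)\ne A$, and since $A$ is irreducible, $\sigma(E)$ — hence also $\nR(\vp|_A)$ — is nowhere dense.

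The one point that is not purely formal is to conclude that $\nR(\vp|_A)$ is \emph{itself} a complex analytic subset of $A$, rather than merely a nowhere dense subset of $\sigma(E)$ coinciding with it off $A^{\sng}$. For this I would first note that $\nR(\vp|_A)$ is closed in $A$ (regularity of $\vp|_A$ is an open condition, by semicontinuity of the two dimensions appearing in its definition). Decomposing $\sigma(E)$ into irreducible components, every component not contained in $A^{\sng}$ already lies in $\nR(\vp|_A)$ — it is closed and contains the part of that component lying in $A^{\reg}$, which is dense in it — so the problem is reduced to describing $\nR(\vp|_A)$ on the union of the components of $\sigma(E)$ contained in $A^{\sng}$, a closed analytic set of strictly smaller dimension, on which one iterates the argument. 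I expect precisely this analysis of $\nR(\vp|_A)$ over the singular locus of $A$ to be the main obstacle; everything preceding it is a direct translation of Gabrielov's ranks through the resolution $\sigma$.
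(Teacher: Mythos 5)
Your reduction to Theorem~\ref{thm:Pawlucki} via a desingularization $\sigma$ is the right idea and is the paper's route as well, and your treatment over $A^{\reg}$ is fine. But the step you yourself flag as ``the main obstacle'' is a genuine gap, not a formality. Over $A^{\sng}$ you prove only one inclusion: regularity of $\psi=\vp\circ\sigma$ at \emph{every} point of $\sigma^{-1}(a)$ implies regularity of $\vp|_A$ at $a$, giving $\nR(\vp|_A)\subseteq\sigma(E)$. To get analyticity you need to know exactly which points of $\sigma(E)\cap A^{\sng}$ lie in $\nR(\vp|_A)$, i.e.\ the converse: that non-regularity of $\psi$ at some $\tilde a\in\sigma^{-1}(a)$ forces non-regularity of $\vp|_A$ at $a$. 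Your proposed substitute --- ``iterate the argument'' on the components of $\sigma(E)$ contained in $A^{\sng}$ --- does not set up a valid induction: the non-regularity of $\vp|_A$ at a point $a$ of such a component concerns the image of a neighbourhood of $a$ \emph{in $A$}, not in that component, so restricting $\vp$ to the component changes the quantity being studied and gives no information about $\nR(\vp|_A)$ there.

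The paper closes exactly this gap by proving the pointwise identity $\nR(\vp|_A)=\sigma(\nR(\psi))$, which makes analyticity immediate from properness of $\sigma$. The mechanism is to compare Gabrielov's ranks directly rather than images: $r^1_{\tilde a}(\psi)=r^1_a(\vp|_A)$ because both equal $\dim A$ minus the generic fibre dimension (the fibres of $\sigma$ being generically zero-dimensional), and $r^3_{\tilde a}(\psi)=r^3_a(\vp|_A)$ because $\ker\sigma^*_{\tilde a}=(0)$, so that $\ker(\vp\circ\sigma)^*_{\tilde a}=\ker\vp^*_a$. This last point is where irreducibility enters in an essential way: a germ at $a$ vanishing on $\sigma(V)$ for a small neighbourhood $V$ of $\tilde a$ vanishes on an analytic set of full dimension $\dim A$, hence on all of $A_a$ \emph{provided $A_a$ is irreducible}; since a globally irreducible $A$ may be locally reducible, the paper first reduces to the case of an irreducible germ $A_a$ by splitting into local components (whose $\sigma$-preimages are unions of connected components of $\tilde A$). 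Your argument is missing both this local-irreducibility reduction and the kernel computation; with them, the two-sided equivalence at every $a$ (singular or not) follows and the lemma is proved. As a minor point, your claim that $\nR(\vp|_A)$ is closed by ``semicontinuity'' is true but would need the identity $\dim_{\vp(b)}\vp(A\cap U)=r^1_b(\vp|_A)$ at nearby points, which deserves a word; in the paper closedness comes for free from analyticity of $\sigma(\nR(\psi))$.
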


\begin{proof}
For any relatively compact open set $\O\subset M$, there exist a complex manifold $\tilde{A}$ and a proper modification $\sigma:\tilde{A}\to\O\cap A$ centred at the singular locus of $\O\cap A$ (that is, $\sigma$ is a proper analytic mapping, $\dim\tilde{A}=\dim A$, $\sigma^{-1}(\O\cap A^{\sng})$ is nowhere dense in $\tilde{A}$, and $\sigma$ restricted to $\tilde{A}\setminus\sigma^{-1}(\O\cap A^{\sng})$ is an isomorphism); see, e.g., \cite{BM3}.

Fix $a\in\O\cap A$. Suppose first that the germ $A_a$ is reducible. Then, after shrinking $\O$, we can assume that $\O\cap A=A_1\cup\dots\cup A_s$ is a decomposition into irreducible analytic sets, each irreducible at $a$. Then, for an open neighbourhood $U$ of $a$ in $\O$, we have $\dim_{\vp(a)}\overline{\vp(U\cap A)}^{\mathrm{Zar}}=\max_{j=1,\dots,s}\dim_{\vp(a)}\overline{\vp(U\cap A_j)}^{\mathrm{Zar}}$, and hence $\vp|_A$ is regular at $a$ if and only if $\vp|_{A_j}$ is regular at $a$ for all $j$. Moreover, for any $j=1,\dots,s$, the inverse image $\sigma^{-1}(A_j)$ is an analytic subset of $\tilde{A}=\sigma^{-1}(\O\cap A)$, of dimension $\dim\tilde{A}$, and hence it is a union of some connected components of $\tilde{A}$.
Thus, $\sigma^{-1}(A_j)$ is a neighbourhood of $\tilde{a}$, for every $\tilde{a}\in\sigma^{-1}(a)$ to which it is adherent, and it suffices to consider the case when $A_a$ is irreducible.\par

Since the generic fibres of $\sigma$ are zero-dimensional, the generic rank of $\vp$ near $a$ equals that of $\vp\circ\sigma$ near $\tilde{a}$ for every $\tilde{a}\in\sigma^{-1}(a)$ (as they are both equal to the difference between $\dim A$ and the generic fibre dimension of $\vp$, by~\cite[Ch.\,V,\S\,3.3]{Lo}). Hence $r^1_{\tilde{a}}(\vp|_{\O\cap A}\circ\sigma)=r^1_a(\vp|_{\O\cap A})$ for every $\tilde{a}\in\sigma^{-1}(a)$.
On the other hand, for every $\tilde{a}\in\sigma^{-1}(a)$, we have $\ker\sigma^*_{\tilde{a}}=(0)$ (by irreducibility of $A_a$), and hence $\ker\vp^*_a=\ker(\vp\circ\sigma)^*_{\tilde{a}}$. It follows that $r^3_{\tilde{a}}(\vp|_{\O\cap A}\circ\sigma)=r^3_a(\vp|_{\O\cap A})$ for all $\tilde{a}\in\sigma^{-1}(a)$, and consequently, that $\vp|_{\O\cap A}$ is regular at $a$ if and only if $\vp|_{\O\cap A}\circ\sigma$ is regular at $\tilde{a}$ for every $\tilde{a}\in\sigma^{-1}(a)$. Therefore $\nR(\vp|_{\O\cap A})=\sigma(\nR(\vp|_{\O\cap A}\circ\sigma))$, which is a nowhere dense analytic subset of $\O\cap A$, by properness of $\sigma$.
\end{proof}
\smallskip

For a holomorphic mapping $\vp:A\to B$ of complex analytic sets, we will denote by $\fbd_a\vp$ the dimension of the germ at $a\in A$ of the 
fibre $\vp^{-1}(\vp(a))$. If $A$ is irreducible, the minimal fibre dimension is attained on a dense Zariski open subset of $A$, and is then 
called the generic fibre dimension of $\vp$, and denoted by $\lambda(\vp)$ (see, e.g.,~\cite[Ch.\,V,\S\,3.3]{Lo}).

%%%%%%%%%%%%%%%%%%%%%%%%%%%%%%%%%%%%%%%%%%%%%%%%%%
\medskip
%%%%%%%%%%%%%%%%%%%%%%%%%%%%%%%%%%%%%%%%%%%%%%%%%%

\section{Proof of Theorem~\ref{thm:holo-dim}}
\label{sec:proof-main}

Let $R$ be an irreducible real analytic set in $\C^n$ and let $p\in R$. Suppose first that $R$ is irreducible at~$p$. Put $d=\dim_{\R}R_p$. Then, by~\cite[Prop.~10]{C}, there exists a real analytic subset $R'\subset R\cap\Omega$ in an open neighbourhood $\Omega$ of $p$, such that $R'$ is of 
dimension at most $d-1$, and every point of $R\setminus R'$ lies in the closure of a $d$-dimensional real analytic manifold contained in $R$.\par

Let $X$ be a complex analytic set in an open neighbourhood $U\subset\Omega$ of $p$ in $\C^n$ such that 
$R_p\subset X_p$, and $X_p$ is the smallest germ with this property. Let $X_z$ and $X_w$ be defined as in 
\eqref{eq:xz}. Then, with the notation of Section~\ref{sec:background},  for $q:=\dl(p)\in\C^{2n}$,
\[
\widehat{X}_q=(X_z\cap X_w)_q
\]
is a complex analytic germ containing $R^c_p$, of dimension equal twice that of $X_p$.
Let $A$ be a complex analytic representative of the complexification $R^c_p$ at $q$; i.e., $A_q=R^c_p$. Then
\[
R_p\subset X_p\ \Rightarrow\ A_q\subset \widehat{X}_q\ \Rightarrow\ (\pi_z(A))_{\pi_z(q)}\subset(\pi_z(X_z))_{\pi_z(q)}.
\]
Hence, $\dim (\pi_z(A))_{\pi_z(q)} \le \dim X_p$, by~\eqref{eq:xz} and~\eqref{eq:x-hat}.

On the other hand, suppose $(\pi_z(A))_{\pi_z(q)}\subset\tilde{Z}_{\pi_z(q)}$ for some complex analytic $\tilde{Z}$ 
in a neighbourhood $V$ of $\pi_z(q)$ in $\C^n$, and $\dim \tilde Z < \dim X_p$. Say, 
$\tilde{Z}=\{z\in V: g_k(z)=0, k=1,\dots,t\}$. Define a complex analytic set $Z$ in a neighbourhood $U$ of 
$p$ as $Z=\{\zeta\in U: g_k(\zeta)=0, k=1,\dots,t\}$. Then $Z=\dl^{-1}(\pi^{-1}_z(\tilde{Z})\cap\Dl)$, and hence
\[
R_p\subset(\dl^{-1}(A\cap\Dl))_p\subset(\dl^{-1}(\pi_z^{-1}(\tilde{Z})\cap\Dl))_p=Z_p\,,
\]
where $\dim_{\C}Z_p=\dim_{\C}\tilde{Z}_{\pi_z(q)}$. This, however, contradicts the choice of $X_p$.
Therefore,
\begin{multline}
\tag{\dag}
R_p\subset X_p \ \ \mathrm{and}\ \ \dim_{\C}X_p=r\quad\Longleftrightarrow\quad \pi_z\mathrm{\ maps\ the\ germ}\ R^c_p\ \mathrm{into\ a\ complex}\\
\mathrm{analytic\ subgerm\ of}\ (\C^n)_{\pi_z(q)}\mathrm{\ of\ dimension}\ r\,.
\end{multline}
\smallskip

Let again $A$ be an irreducible complex analytic set in a neighbourhood $U'$ of $q\in\C^{2n}$, such that $A_q=R^c_p$, and let a proper 
real analytic subset $R'\subset R\cap\Omega$ be as at the beginning of the proof. Then, for every $x\in R\setminus R'$ in a sufficiently 
small open neighbourhood $U\subset\Omega$ of $p$, the germ $A_{\dl(x)}$ contains the complexification $R^c_x$, and
\[
\dim_{\C}R^c_x=\dim_{\C}A_{\dl(x)}=\dim_{\C}A_q=d\,.
\]
Consider the mapping $\vp:A\to\C^n$ induced by the restriction $\pi_z|_{U'}$. By irreducibility of $A$, every point $\eta\in A$ has arbitrarily small open neighbourhoods $V^{\eta}$ with the property
\[
\tag{\ddag}
\dim_{\C}\vp(V^{\eta})=\dim_{\C}A-\lambda(\vp)=d-\lambda(\vp)\,,
\]
where $\lambda(\vp)$ is the generic fibre dimension of $\vp$ on $A$ (see, e.g.,~\cite[Ch.\,V,\S\,3]{Lo}). Let $B=\nR(\vp)$ be the non-regularity locus of $\vp$. By Lemma~\ref{lem:non-regular}, $B$ is a proper analytic subset of $A$.\par

Now, for every $\eta\in A\setminus B$, the germ $A_{\eta}$ is mapped by $\vp$ into an analytic subgerm of $\C^n_{\vp(\eta)}$ of dimension $d-\lambda(\vp)$, whilst, for every $\eta\in B$, and arbitrarily small open neighbourhood $V^{\eta}$ of $\eta$,
\[
\vp(V^{\eta})\subset Z, \ \mathrm{and}\ Z\mathrm{\ locally\ analytic} \ \Rightarrow\ \dim_{\C}Z_{\vp(\eta)}>d-\lambda(\vp)\,.
\]
Put $S=(R'\cup\dl^{-1}(B\cap\Dl))\cap U$. Then $S$ is a proper real analytic subset of $R\cap U$, and the holomorphic closure dimension of $R$ is equal to $d-\lambda(\vp)$ on $(R\cap U)\setminus S$, by $(\dag)$ and $(\ddag)$.
\smallskip

To complete the proof, it remains to show that, if $R$ is reducible at $p$, there exist an open neighbourhood $\Omega$ of $p$, and real analytic subsets $R_j\subset R\cap\Omega$, $j=1,\dots,s$, each irreducible at $p$, such that $R\cap\Omega=R_1\cup\dots\cup R_s$, and if the holomorphic closure dimension of the $R_j$ is constantly $h_j$, say, on the complement of a proper analytic subset $S_j\subset R_j$, then $h_j=h_k$ for all $j,k=1,\dots,s$. For this it suffices to know that, given any two points in $R$, there exists a path $\gamma\subset R$ that connects these points, and has the property that if $a\in\gamma$ is a point at which $R$ is reducible, then $\gamma$ stays in the same local irreducible component of $R$ at $a$. For a proof by contradiction, let $\Sigma$ denote the locus of those points of $R$ that can be connected with a given point $q$ by a path $\gamma$ which satisfies the above property, and suppose that $\Sigma\neq R$. We claim that $\Sigma$ is a real analytic set. Indeed, every point $a\in\Sigma$ admits an open neighbourhood $U$ such that $\Sigma\cap U$ is the union of some of the local irreducible components of $R\cap U$, and thus $\Sigma\cap U$ is real analytic in $U$. Since $\Sigma$ is clearly closed, it is a real analytic subset of $R$. Let now $\Sigma'=\overline{R\setminus\Sigma}$. Then $\Sigma'$ is also real analytic. Indeed, if $a\in R\setminus\Sigma$, then an open neighbourhood of $a$ in $R$ is contained in $\Sigma'$, and if $a\in(\overline{R\setminus\Sigma})\setminus(R\setminus\Sigma)$, then $R$ is reducible at $a$, and in a small open neighbourhood $U$ of $a$, $\Sigma'\cap U$ is the union of those local irreducible components of $R$ at $a$ that are not in $\Sigma\cap U$. Thus $R=\Sigma\cup\Sigma'$ is reducible; a contradiction. Therefore $\Sigma=R$, which completes the proof.

%%%%%%%%%%%%%%%%%%%%%%%%%%%%%%%%%%%%%%%%%%%%%%%%%%
\medskip
%%%%%%%%%%%%%%%%%%%%%%%%%%%%%%%%%%%%%%%%%%%%%%%%%%

\section{Semianalytic consequences}
\label{sec:corollaries}

\begin{proof}[Proof of Corollary~\ref{cor:semi-holo-dim}]
For the proof of case \emph{(a)}, let $R$ be a semianalytic set of dimension $d>0$, contained in an irreducible real analytic $d$-dimensional set $Z$ in $\C^n$. By Theorem~\ref{thm:holo-dim}, there exists a real analytic set $S'\subset Z$ of dimension at most $d-1$, and such that the holomorphic closure dimension is constant on $Z\setminus S'$; say, equal to $h_Z$. By the semianalytic stratification, there is a closed semianalytic subset $R'\subset R$, such that $\dim_{\R}R'\leq d-1$, and $\dim_{\R}R_p=d$ for every point $p\in R\setminus R'$. Then $S=S'\cup R'$ is a closed semianalytic subset of $R$, of dimension at most $d-1$, and such that, for every $p\in R\setminus S$, $\dim_{HC}R_p=\dim_{HC}Z_p=h_Z$\,.\par

Now, for \emph{(b)}, suppose that $R$ is connected, semianalytic of pure dimension $d>0$, and that, for every point $p\in R$, there exist an open neighbourhood $U^p$ and an irreducible real analytic set $Z\subset U^p$ of dimension $d$, such that $R\cap U^p\subset Z$. Then, as above, there is a closed semianalytic subset $S\subset R\cap U^p$, of dimension at most $d-1$, such that the holomorphic closure dimension is constant on $(R\cap U^p)\setminus S$; say, equal to $h_p$. It remains to show that, for any two points $p$ and $q$ in $R$, $h_p=h_q$. Let then $\gamma$ be a path joining $p$ and $q$ in $R$, and let, for every $x\in\gamma$, $U^x$ be an open neighbourhood of $x$ with the above properties. By compactness of $\gamma$, there are finitely many $p=x_0,x_1,\dots,x_s=q$ such that $U^{x_0},\dots,U^{x_s}$ cover $\gamma$. Then $h_{x_i}=h_{x_{i+1}}$, $i=0,\dots,s-1$, as they agree on the overlaps, and hence $h_p=h_q$, as required.
\end{proof}
\medskip

\begin{proof}[Proof of Theorem~\ref{thm:semialgebraic}]
Let $R$ be a connected semianalytic set of pure dimension $d>0$, contained (resp. locally contained) in an irreducible (resp. locally irreducible) real analytic set $Z$ in $\C^n$ of the same dimension. Suppose first that $R$ is semialgebraic. Then there exists a real algebraic set $Y$ in $\C^n$, of pure dimension $d$, such that $R\subset Y$ and, for any open neighbourhood $U$ of any point $p\in\bar{R}$, $R\cap U$ contains an open subset of $Y$. Then, at every point $p\in\bar{R}$ where $Z_p$ is irreducible, we have $Z_p=Y_p$. It follows that, in the proof of Theorem~\ref{thm:holo-dim}, the complexification of $Z_p$ is a complex algebraic germ. Therefore it suffices to show that a projection from an algebraic set to a manifold is always Gabrielov regular. This however follows from Chevalley's theorem stating that a projection of an algebraic costructible set is itself constructible, and the fact that
\[
\dim_{\C}E=\dim_{\C}\overline{E}^{\mathrm{Zar}}
\]
for a constructible set $E$ (see~\cite[Ch.\,VII,\S\,8]{Lo}).
\smallskip

Suppose now that the dimension of the ambient space $\C^n$ is at most 2. Then, by the proof of Theorem~\ref{thm:holo-dim}, it suffices to show that a projection into $\C^n$ is always regular for $n\leq2$. For this, consider a mapping $\vp:A\to\C^2$ with $A$ an irreducible complex analytic set of dimension $d>0$, and let
\[
X=\{x\in A: \mathrm{fbd}_x>\lambda(\vp)\}
\]
be the locus of non-generic fibre dimension (cf.~Section~\ref{sec:background}). By the Cartan-Remmert Theorem (see \cite[Ch.\,V,\S\,3]{Lo}), $X$ is a proper analytic subset of $A$, and hence of dimension at most $d-1$. Moreover, the fibre dimension of $\vp$ is constant on $A\setminus X$, hence by the Remmert Rank Theorem (\cite[Ch.\,V,\S\,6]{Lo}), every point $x\in A\setminus X$ has arbitrarily small open neighbourhoods $V^x$ with $\vp(V^x)$ locally analytic in $\C^2$, of dimension $d-\lambda(\vp)$. On the other hand, since the generic fibre dimension of $\vp$ along every irreducible component of $X$ is at least $\lambda(\vp)+1$, it follows that
\[
\tag{*}
\dim_{\C}\vp(W)\leq(d-1)-(\lambda(\vp)+1)=(d-\lambda(\vp))-2
\]
for every subset $W\subset X$ (cf. $(\ddag)$ in Section~\ref{sec:proof-main}).
\par

Suppose that $\nR(\vp)\neq\varnothing$. Then $d-\lambda(\vp)\leq1$, for otherwise $\vp$ would be dominating; i.e., $r^1_x(\vp)=2$ for every $x$ in an open neighbourhood of $p$, and hence $\vp$ would have to be regular. But, clearly, $\nR(\vp)\subset X$ (by the Remmert Rank Theorem again), and hence, by $(*)$,
\[
\dim_{\C}\vp(\nR(\vp))\leq(d-\lambda(\vp))-2\leq-1\,.
\]
This is only possible if $\vp(\nR(\vp))=\varnothing$, hence $\nR(\vp)=\varnothing$; a contradiction.
\end{proof}

%%%%%%%%%%%%%%%%%%%%%%%%%%%%%%%%%%%%%%%%%%%%%%%%%%
\medskip
%%%%%%%%%%%%%%%%%%%%%%%%%%%%%%%%%%%%%%%%%%%%%%%%%%

\section{CR consequences}
\label{prop-proofs}

\begin{proof}[Proof of Proposition \ref{prop:cr}]
Let $M$ be a real analytic CR submanifold of $\C^n$ of dimension $d>0$ and
CR dimension $m\ge 0$. If we identify $T_p M$, the tangent space to $M$ at 
a point $p\in M$, with the appropriate real linear subspace of $\C^n$, then
clearly, the smallest complex linear subspace of $\C^n$ that contains $T_p M$ 
cannot be of dimension less than $d-m$. It follows then that no germ 
$M_p$ (for $p\in M$) can be contained in a complex analytic set of dimension less than $d-m$. 
Thus, to prove that $\dim_{HC}M_p=d-m$, it suffices to construct a germ of 
a complex analytic set of dimension  $d-m$ containing the germ $M_p$ for
every point $p\in M$. 

Without loss of generality assume that $p=0\in M$. Let 
$\phi: \R^d\to \C^n$ be a real analytic parametrization of $M$ near the origin 
with $\phi(0)=0$. Then each component $\phi_j$, $j=1,\dots,n$, of the map $\phi$ 
can be represented as a convergent power series 
$$
\phi_j(t)=\sum_{|\nu|>0}c_{j\nu} t^\nu,
$$
where $t\in\R^d$, $c_{j\nu}\in\C$, and $\nu\in\N^d$. Replacing $t$ by a complex 
variable $w=t+is$, we obtain a holomorphic map $\phi^c(w)$ from a neighbourhood $U$ 
of the origin in $\C^d$ into $\C^n$. 

Since $\phi^c$ is a holomorphic map, $d\phi^c(Jv)=Jd\phi^c(v)$ for any $v\in T_w\C^d$,
$w\in U$, where $J$ is the operator of multiplication by $i$. In particular, for 
$w\in U\cap\R^d$, the image of $d\phi^c(w)$ is precisely the smallest
complex subspace containing the images of vectors tangent to $\R^d$ at $w$.
It follows that $d\phi^c(w)$ has rank $d-m$ for $w\in\R^d\cap U$. 

Suppose that $B$ is any $(d-m+1)\times(d-m+1)$ submatrix of $d\phi^c$. Then $\det B$ is a
holomorphic function, and, since the rank of $d\phi^c$ equals $d-m$ on $\R^d\cap U$, it follows
that $\det B$ vanishes on $\R^d\cap U$. But then $\det B$ vanishes 
identically in $U$. Therefore, there is a neighbourhood $V$ of $U\cap \R^d$,
where $d\phi^c$ has constant rank $d-m$. By the Rank Theorem, $\phi^c(V)$ is a $(d-m)$-dimensional 
complex submanifold of $\C^n$ which contains $M$ (in a neighbourhood of $p$).
\end{proof}

\begin{proof}[Proof of Theorem \ref{cor:cr}]
Denote by $G$ the Grassmannian ${\rm Gr}(2n,d)$; i.e., the space of $d$-dimensional real linear
subspaces of $\R^{2n}\cong \C^n$, and let $G_m$ be the subset of $G$ consisting of CR subspaces
of $\C^n$ of CR dimension at least $m$. Let $L$ be a subspace of $\R^{2n}$ generated by $\{v^j\}$, 
$j=1,\dots,d$, $v^j=(v^j_{1},\dots,v^j_{2n})\in\R^{2n}$, and let $w^{j}=(w^j_1,\dots,w^j_{2n})=Jv^j$ (viewed as vectors
in $\C^n$). Then $L\in G_m$ if and only if, for 
any matrix of the form
\begin{equation*}
\begin{bmatrix}
v^1_1 & \cdots & v^d_1 & w^{1}_1 & \cdots & w^{d-2m+1}_1 \\
\vdots & & \vdots & \vdots & & \vdots\\
v^1_{2n} & \cdots & v^d_{2n} & w^{1}_{2n} & \cdots & w^{d-2m+1}_{2n}
\end{bmatrix},
\end{equation*}
any $(2d-2m+1)$-minor vanishes, whenever $2d-2m+1\leq2n$. 

If $M$ is a real analytic manifold in $\C^n$, then the assignment $p \mapsto T_pM$ defines a real
analytic map from $M$ into $G$. It follows that, for every $k\leq\left[\frac{d}{2}\right]$, the set 
\begin{equation}\label{e:ss}
S_k(M)=\{p\in M : \dim_{CR} T_p M\ge k\}
\end{equation}
is a real analytic subset of $M$ (possibly empty). Let $m=\min\{k: S_k(M)\ne\varnothing\}$. Then
$$M=S_m(M)\supset S_{m+1}(M)\supset S_{m+2}(M)\supset\cdots\supset S_{l}(M),\ \ 
l=\left[\frac{d}{2}\right].$$
By construction, $S_{m+1}(M)$ is a proper real analytic subset of $M$, and $M\setminus S_{m+1}(M)$ is a CR manifold of CR dimension $m$.

Suppose now that $R$ is an irreducible real analytic set of pure dimension $d$. By \cite{l1},
the sets $S_k(R^{\reg})$ in \eqref{e:ss} are semianalytic subsets of $R$. Let $S$ be the
set from Theorem~\ref{thm:holo-dim}. Then $M=R\setminus (R^{\sng}\cup S)$ is a real analytic manifold of dimension $d$. Let $M_1,M_2,\dots$ 
be the connected components of $M$. From the above considerations, each $M_j$ is a CR manifold outside a 
nowhere dense closed set. Further, by Theorem~\ref{thm:holo-dim}, the holomorphic closure dimension $h$ is 
constant on $M$, and therefore, it follows from Proposition~\ref{prop:cr} that each $M_j$ is a CR 
manifold of the same CR dimension $m=d-h$ for all~$j$. Hence, $R$ is a CR manifold in the complement 
of the semianalytic set $Y=S_{m+1}(R^{\reg})\cup R^{\sng}\cup S$.
\end{proof}

%%%%%%%%%%%%%%%%%%%%%%%%%%%%%%%%%%%%%%%%%%%%%%%%%%
\medskip
%%%%%%%%%%%%%%%%%%%%%%%%%%%%%%%%%%%%%%%%%%%%%%%%%%

\section{Examples}
\label{sec:examples}

In the following example we show that the exceptional set $S$ of Theorem~\ref{thm:holo-dim} may be non-empty.

\begin{example}
\label{ex:S-non-empty}
Let $R\subset\C^3_{\zeta}$ be the real analytic manifold defined by equations
\[
\begin{aligned}
\begin{cases}
x_2 =& x_1^2+y_1^2\\
y_2 =& 0\\
x_3 =& x_2 e^{x_1} \cos{y_1}\\
y_3 =& -x_2 e^{x_1} \sin{y_1}
\end{cases}
\quad\qquad\mathrm{or}\quad\qquad
\begin{cases}
\zeta_2 =& \zeta_1\bar{\zeta}_1\\
\bar{\zeta}_2 =& \bar{\zeta_1}\zeta_1\\
\zeta_3 =& \zeta_2e^{\bar{\zeta}_1}\\
\bar{\zeta}_3 =& \bar{\zeta}_2e^{\zeta_1}\ ,
\end{cases}
\end{aligned}
\]
where $\zeta_j=x_j+iy_j$, $j\leq3$. Then the complexification of $R_0$ equals the germ at $0\in\C^6_{(z,w)}$ of the set
\[
A=\{z_2-z_1w_1=w_2-z_1w_1=z_3-z_2e^{w_1}=w_3-w_2e^{z_1}=0\}\,,
\]
whose coordinate projection to $\C^3_z$ agrees with that of the set
\[
A'=\{(z,w)\in\C^6\,:\,w_2=w_3=z_2-z_1w_1=z_3-z_2e^{w_1}=0\}\,.
\]
Consider the local ring of $A'$ at the origin,
\begin{multline}
\notag
\OO_{A',0}=\frac{\C\{z_1,z_2,z_3,w_1,w_2,w_3\}}{(w_2,w_3,z_2-z_1w_1,z_3-z_2e^{w_1})} \cong\frac{\C\{z_1,z_2,z_3,w\}}{(z_2-z_1w,z_3-z_2e^{w})}\\
\cong\frac{\C\{z_1,z_2,z_3,v,w\}}{(z_1-v,z_2-vw,z_3-vwe^w)}\,.
\end{multline}
The latter is the local ring of the germ at the origin of the graph of the classical Osgood mapping
\[
\vp:\C^2\ni(v,w)\mapsto(v,vw,vwe^w)\in\C^3_z\,.
\]
Now, by~\cite[\S\,2.5]{GR}, every power series $F\in\C\{z_1,z_2,z_3\}$ satisfying $F(v,vw,vwe^w)=0$, vanishes identically. It follows that $\ker\vp^*_0=\{0\}$, and hence $r^3_0(\vp)=3$, whilst $r^1_0(\vp)=2$. Therefore, $\vp$ is not regular at the origin, and consequently, the set $S$ from Theorem~\ref{thm:holo-dim} is non-empty. In fact, $S=\{0\}$.
\end{example}

\begin{remark}
\label{rem:sky's-the-limit}
As mentioned in Section~\ref{sec:intro}, in general, the holomorphic closure dimension of a real analytic set $R$ in $\C^n$ is bounded only by the dimension $n$ of the ambient space. Indeed, by~\cite[\S\,2.5]{GR} again, there exists, for every $n\geq3$, a monomorphism of $\C$-algebras
\[
\theta:\C\{z_1,\dots,z_n\} \hookrightarrow \C\{v,w\}\,,
\]
and hence, a holomorphic mapping $\vp:\C^2\to\C^n$ with $\vp^*_0=\theta$. It follows that $\ker\vp^*_0=\{0\}$, and hence $r^3_0(\vp)=\dim\C\{z_1,\dots,z_n\}/(0)=n$. Of course, $r^1_0(\vp)\leq2$, since, in any case, this rank is bounded above by the dimension of the source.
Then, as in the above example, we may construct a $2$-dimensional real analytic manifold $R$ in $\C^n_{\zeta}$, such that the complexification $R^c_0$ has the same projection to $\C^n_z$ as the graph of the map $\vp$, and consequently $\dim_{HC}R_0=n$.
\end{remark}
\medskip

One might suppose that the holomorphic closure dimension of a real analytic set $R$ admits some sort of stratification. It turns out that this is not the case. We are indebted to E. Bierstone for suggesting the following example, which shows that no semianalytic (even subanalytic) stratification for $\dim_{HC}R_p$ exists beyond $S$ in general.

\begin{example}
\label{ex:no-semi-coherent}
In~\cite{P1} (see also~\cite[Ex.\,1.29]{BM2}), Paw{\l}ucki constructed an analytic mapping
\[
(v,x,y)\stackrel{\vp}{\mapsto}(v,x,xy,xg(v,y),xh(v,x,y))
\]
from a small open neighbourhood of the origin in $\C^3$ to $\C^5$, with the following properties:
\begin{itemize}
\item[(i)] $\nR(\vp)=\{x=y=0\}$
\item[(ii)] $r^2_a(\vp)=5$ at every point $a\in\{(0,0,0)\}\cup\{(1/n,0,0):n\geq1\}$
\item[(iii)] $r^3_a(\vp)=4$ at every other $a\in\nR(\vp)$\,.
\end{itemize}
Along the lines of the previous example, we construct the set $R\subset\C^5_{\zeta}$ as defined by equations
\[
\begin{cases}
\zeta_3 =& \zeta_2\bar{\zeta}_2\\
\bar{\zeta}_3 =& \bar{\zeta_2}\zeta_2\\
\zeta_4 =& \zeta_2g(\zeta_1,\bar{\zeta}_2)\\
\bar{\zeta}_4 =& \bar{\zeta}_2\bar{g}(\bar{\zeta}_1,\zeta_2)\\
\zeta_5 =& \zeta_2h(\zeta_1,\zeta_2,\bar{\zeta}_2)\\
\bar{\zeta}_5 =& \bar{\zeta}_2\bar{h}(\bar{\zeta}_1,\bar{\zeta}_2,\zeta_2)\,.
\end{cases}
\]
Then
\[
\dim_{HC}R_p=
\begin{cases}
3 &: p\in R\setminus S\\
4 &: p\in S\setminus T\\
5 &: p\in T\,,
\end{cases}
\]
where \ $S=\{\zeta_2=\zeta_3=\zeta_4=\zeta_5=0\}$, \ and \ $T=\{\zeta\in S:\zeta_1=0\mathrm{\ or}\ \zeta_1=1/n, n\geq1\}$\, is not subanalytic.
\end{example}

\begin{example}
\label{ex:cartan}
As suggested by the proof of Theorem~\ref{thm:holo-dim}, the set $S$ of the exceptional holomorphic closure dimension may have non-empty interior in an irreducible real analytic $R$, if $R$ is not of pure dimension. Consider, for instance, the Cartan umbrella
\[
R=\{(z_1,z_2)=(x_1+iy_1,x_2+iy_2)\in\C^2\ :\ x_2(x_1^2+y_1^2)-x_1^3=y_2=0\}\,,
\]
which is of real dimension $2$ except at the points of the ``stick'' $S=\{x_1=y_1=y_2=0\}\setminus\{0\}$. Then $\dim_{HC}R_p=2$ for every $p\in R\setminus S$, and $\dim_{HC}R_p=1$ for every $p\in S$, since then $R_p$ is contained in $X_p=(\{z_1=0\})_p$.
\end{example}

%%%%%%%%%%%%%%%%%%%%%%%%%%%%%%%%%%%%%%%%%%%%%%%%%%
\medskip
%%%%%%%%%%%%%%%%%%%%%%%%%%%%%%%%%%%%%%%%%%%%%%%%%%
%References
%%%%%%%%%%%%%%%%%%%%%%%%%%%%%%%%%%%%%%%%%%%%%%%%%%
\bibliographystyle{amsplain}

\end{document}